\theoremstyle{plain}
\newtheorem{thm}{Theorem}[section]
\newtheorem{lem}{Lemma}[section]
\theoremstyle{remark}
\DeclareMathOperator{\td}{d\mspace{-2mu}}
\numberwithin{equation}{section}
\begin{document}

\title[A new proof of the geometric-arithmetic mean inequality]
{A new proof of the geometric-arithmetic mean inequality by Cauchy's integral formula}

\author[F. Qi]{Feng Qi}
\address[Qi]{Department of Mathematics, School of Science, Tianjin Polytechnic University, Tianjin City, 300387, China}
\email{\href{mailto: F. Qi <qifeng618@gmail.com>}{qifeng618@gmail.com}, \href{mailto: F. Qi <qifeng618@hotmail.com>}{qifeng618@hotmail.com}, \href{mailto: F. Qi <qifeng618@qq.com>}{qifeng618@qq.com}}
\urladdr{\url{http://qifeng618.wordpress.com}}

\author[X.-J. Zhang]{Xiao-Jing Zhang}
\address[Zhang]{Department of Mathematics, School of Science, Tianjin Polytechnic University, Tianjin City, 300387, China}
\email{\href{mailto: X.-J. Zhang <xiao.jing.zhang@qq.com>}{xiao.jing.zhang@qq.com}}

\author[W.-H. Li]{Wen-Hui Li}
\address[Li]{Department of Mathematics, School of Science, Tianjin Polytechnic University, Tianjin City, 300387, China}
\email{\href{mailto: W.-H. Li <wen.hui.li@foxmail.com>}{wen.hui.li@foxmail.com}}

\subjclass[2010]{Primary 26E60, 30E20; Secondary 26A48, 44A20}

\keywords{Integral representation; Cauchy's integral formula; Arithmetic mean; Geometric mean; GA mean inequality; New proof}

\begin{abstract}
Let $a=(a_1,a_2,\dotsc,a_n)$ for $n\in\mathbb{N}$ be a given sequence of positive numbers.
In the paper, the authors establish, by using Cauchy's integral formula in the theory of complex functions, an integral representation of the principal branch of the geometric mean
\begin{equation*}
G_n(a+z)=\Biggl[\prod_{k=1}^n(a_k+z)\Biggr]^{1/n}
\end{equation*}
for $z\in\mathbb{C}\setminus(-\infty,-\min\{a_k,1\le k\le n\}]$, and then provide a new proof of the well known GA mean inequality.
\end{abstract}

\thanks{This paper was typeset using \AmS-\LaTeX}

\maketitle

\section{Introduction}

Let $a=(a_1,a_2,\dotsc,a_n)$ for $n\in\mathbb{N}$, the set of all positive integers, be a given sequence of positive numbers. Then the arithmetic and geometric means $A_n(a)$ and $G_n(a)$ of the numbers $a_1, a_2, \dotsc, a_n$ are defined respectively as
\begin{equation}
A_n(a)=\frac1n\sum_{k=1}^na_k
\end{equation}
and
\begin{equation}
G_n(a)=\Biggl(\prod_{k=1}^na_k\Biggr)^{1/n}.
\end{equation}
It is common knowledge that
\begin{equation}\label{AG-ineq}
G_n(a)\le A_n(a),
\end{equation}
with equality if and only if $a_1=a_2=\dotsm=a_n$.
\par
There has been a large number, presumably over one hundred, of proofs of the GA mean inequality~\eqref{AG-ineq} in the mathematical literature. The most complete information, so far, can be found in the monographs~\cite{bellman, bullenmean, hlp, kuang-3rd, mit, Mitrinovic-Vasic-1969} and a lot of references therein.
\par
In this paper, we establish, by using Cauchy's integral formula in the theory of complex functions, an integral representation of the principal branch of the geometric mean
\begin{equation}
G_n(a+z)=\Biggl[\prod_{k=1}^n(a_k+z)\Biggr]^{1/n}
\end{equation}
for $z\in\mathbb{C}\setminus(-\infty,-\min\{a_k,1\le k\le n\}]$, and provide a new proof of the GA mean inequality~\eqref{AG-ineq}.

\section{Lemmas}

In order to prove our main results, we need the following lemmas.

\begin{lem}\label{AG-New-lem=1}
For $z\in\mathbb{C}\setminus(-\infty,-\min\{a_k,1\le k\le n\}]$, the principal branch of the complex function
\begin{equation}
f_n(z)=G_n(a+z)-z,
\end{equation}
where $a+z=(a_1+z,a_2+z,\dotsc,a_n+z)$, meets
\begin{equation}\label{AG-New-lem=1-lim}
\lim_{z\to\infty}f_n(z)=A_n(a).
\end{equation}
\end{lem}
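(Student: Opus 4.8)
The plan is to extract the leading power of $z$ from the product under the $n$th root and then expand as $z\to\infty$. For $z\in\mathbb{C}\setminus(-\infty,-\min\{a_k,1\le k\le n\}]$ each factor $a_k+z$ lies off the ray $(-\infty,0]$, so the principal branch in the statement equals
\begin{equation*}
G_n(a+z)=\exp\Biggl(\frac1n\sum_{k=1}^n\log(a_k+z)\Biggr),
\end{equation*}
with $\log$ the principal logarithm. Since $a_k+z=z\bigl(1+\frac{a_k}{z}\bigr)$, we may write $\log(a_k+z)=\log z+\log\bigl(1+\frac{a_k}{z}\bigr)+2\pi i\,m_k(z)$ for some integer $m_k(z)$, and the first task is to pin down $m_k$.

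First I would check that $m_k(z)=0$ for every $k$ once $|z|$ is large. On the set $\{z:|z|\ge R\}$ intersected with the slit plane, which is connected for $R$ large, each of $\log(a_k+z)$, $\log z$ and $\log\bigl(1+\frac{a_k}{z}\bigr)$ is continuous (none of the relevant points reaches $(-\infty,0]$ there), so $m_k(z)$ is a continuous, integer-valued, hence constant function; evaluating at a large positive real $z$, where all three logarithms are real, shows $m_k\equiv0$. Consequently, for $|z|$ large,
\begin{equation*}
G_n(a+z)=z\exp\Biggl(\frac1n\sum_{k=1}^n\log\Bigl(1+\frac{a_k}{z}\Bigr)\Biggr).
\end{equation*}

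The rest is a routine expansion at infinity: from $\log(1+w)=w+O(w^2)$ and $e^w=1+w+O(w^2)$ as $w\to0$ one gets $\frac1n\sum_{k=1}^n\log\bigl(1+\frac{a_k}{z}\bigr)=\frac{A_n(a)}{z}+O(1/z^2)$ and then, exponentiating, $1+\frac{A_n(a)}{z}+O(1/z^2)$, so that $G_n(a+z)=z+A_n(a)+O(1/z)$ as $z\to\infty$. Hence $f_n(z)=G_n(a+z)-z\to A_n(a)$, which is~\eqref{AG-New-lem=1-lim}.

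The hard part will be precisely the branch bookkeeping in the middle paragraph: one must be sure that $z$ factors out of the $n$th root over the whole exterior of a large disk in the slit plane, not merely on a half-plane where the identity $\log(zw)=\log z+\log w$ is automatic. Once that factorization is in hand, the limit drops out of an elementary Taylor estimate.
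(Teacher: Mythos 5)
Your proof is correct, and it differs from the paper's mainly in execution rather than in the basic reduction. The paper also factors $z$ out of the $n$-th root: it rewrites $f_n(z)=z\bigl[G_n\bigl(1+\frac az\bigr)-1\bigr]$, substitutes $w=1/z$, and finishes in one line by L'H\^ospital's rule, i.e.\ by recognizing $\lim_{w\to0}\frac{1}{w}\bigl(\bigl[\prod_{k=1}^n(1+a_kw)\bigr]^{1/n}-1\bigr)$ as the derivative of $\bigl[\prod_{k=1}^n(1+a_kw)\bigr]^{1/n}$ at $w=0$, which equals $A_n(a)$. You finish instead by a direct Taylor expansion of $\log$ and $\exp$, obtaining the sharper statement $G_n(a+z)=z+A_n(a)+O(1/z)$; that is a more elementary route to the same limit. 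Where your write-up genuinely adds value is the middle paragraph: the identity $G_n(a+z)=z\,G_n\bigl(1+\frac az\bigr)$ for the \emph{principal branch} is exactly the branch bookkeeping the paper uses silently when it pulls $z$ outside the root, and your argument --- that $m_k(z)$ is a continuous integer-valued function on the connected set $\{|z|\ge R\}$ intersected with the slit plane, hence constant, hence $0$ by evaluation at large positive real $z$ --- supplies the justification the published proof omits. In short: same reduction to behaviour at infinity, a more explicit asymptotic expansion in place of L'H\^ospital, and a rigorous treatment of the branch issue that the paper takes for granted.
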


\begin{proof}
By L'H\^ospital's rule in the theory of complex functions, we have
\begin{multline*}
\lim_{z\to\infty}f_n(z)=\lim_{z\to\infty}\biggl\{z\biggl[G_n\biggl(1+\frac{a}z\biggr)-1\biggr]\biggr\} \\
=\lim_{z\to0}\frac{G_n(1+az)-1}{z}
=\lim_{z\to0}\frac{\td}{\td z}\Biggl[\prod_{k=1}^n(1+a_kz)\Biggr]^{1/n}
=A_n(a),
\end{multline*}
where
$
1+\frac{a}z=\bigl(1+\frac{a_1}z,1+\frac{a_2}z,\dotsc,1+\frac{a_n}z\bigr)
$
and
$1+az=(1+a_1z,1+a_2z,\dotsc,1+a_nz)$. Lemma~\ref{AG-New-lem=1} is thus proved.
\end{proof}

\begin{lem}\label{Im-comput-AG-New-P-lem}
For $z\in\mathbb{C}\setminus(-\infty,0]$ and $a=(a_1,a_2,\dotsc,a_n)$ satisfying $a_\ell\le a_{\ell+1}$ for $1\le \ell\le n-1$, let
\begin{equation}
h_n(z)=G_n(a-a_1+z)-z,
\end{equation}
where $a-a_1+z=(z,a_2-a_1+z,\dotsc,a_n-a_1+z)$. Then the imaginary part of the principal branch of $h_n(z)$ meets
\begin{equation}\label{Im-comput-AG-New-P-eq}
\lim_{\varepsilon\to0^+}\Im h_n(-t+i\varepsilon)=
\begin{cases}\displaystyle
\Biggl[\prod_{k=1}^n|a_k-a_1-t|\Biggr]^{1/n}\sin\frac{\ell\pi}n, & t\in(a_\ell-a_1,a_{\ell+1}-a_1]\\
0, & t>a_n-a_1
\end{cases}
\end{equation}
for $1\le \ell\le n-1$.
\end{lem}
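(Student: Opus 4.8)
The plan is to reduce $h_n$ to the exponential of a sum of principal logarithms and then read off the limiting argument of each factor as the point $z$ descends to the cut $(-\infty,0]$ from the upper half-plane.

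Write $b_k=a_k-a_1$ for $1\le k\le n$, so that $0=b_1\le b_2\le\dots\le b_n$ and $h_n(z)=\bigl[\prod_{k=1}^n(z+b_k)\bigr]^{1/n}-z$. First I would record that, for $z\in\mathbb{C}\setminus(-\infty,0]$,
\[
G_n(a-a_1+z)=\exp\Biggl(\frac1n\sum_{k=1}^n\operatorname{Log}(z+b_k)\Biggr),
\]
where $\operatorname{Log}$ is the principal branch of the logarithm, with $\Im\operatorname{Log}\in(-\pi,\pi]$. Each summand $\operatorname{Log}(z+b_k)$ is holomorphic off $(-\infty,-b_k]\subseteq(-\infty,0]$, so the right-hand side is \emph{the} holomorphic determination of the $n$th root on the whole slit plane that is positive for large positive real $z$; that is, it is exactly the principal branch meant in the statement, and $\Im h_n(z)=\Im\exp\bigl(\frac1n\sum_{k=1}^n\operatorname{Log}(z+b_k)\bigr)-\Im z$.

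Next I would fix $t>0$, set $z=-t+i\varepsilon$, and let $\varepsilon\to0^+$. For each $k$ the factor $z+b_k=(b_k-t)+i\varepsilon$ satisfies $|z+b_k|\to|b_k-t|$ and
\[
\Im\operatorname{Log}(z+b_k)=\arg(z+b_k)\longrightarrow\begin{cases}0,&b_k>t,\\ \pi,&b_k<t,\end{cases}
\]
since for small $\varepsilon>0$ this factor lies in the first quadrant and tends to the positive number $b_k-t$ when $b_k>t$, and lies in the second quadrant and tends to the negative number $b_k-t$ from the upper half-plane when $b_k<t$. For $t\in(a_\ell-a_1,a_{\ell+1}-a_1]=(b_\ell,b_{\ell+1}]$ the hypothesis $a_\ell\le a_{\ell+1}$ forces $b_k<t$ for $k=1,\dots,\ell$ and $b_k\ge t$ for $k=\ell+1,\dots,n$, whence
\[
\frac1n\sum_{k=1}^n\operatorname{Log}(z+b_k)\longrightarrow\frac1n\sum_{k=1}^n\ln|b_k-t|+i\,\frac{\ell\pi}{n},
\]
while for $t>b_n$ every factor contributes $\pi$ and the limiting imaginary part of the average is $\pi$. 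Exponentiating, using continuity of $\exp$ and $\Im(-z)=-\varepsilon\to0$, now gives the asserted value $\bigl[\prod_{k=1}^n|a_k-a_1-t|\bigr]^{1/n}\sin\frac{\ell\pi}{n}$ on $(b_\ell,b_{\ell+1}]$ and $\bigl[\prod_{k=1}^n|a_k-a_1-t|\bigr]^{1/n}\sin\pi=0$ for $t>b_n$.

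The interchange of $\lim_{\varepsilon\to0^+}$ with the finite sum and with $\exp$ is legitimate by continuity, so the only point deserving a remark is the right endpoint $t=b_{\ell+1}$ (and the repeated nodes, should some of the $a_k$ coincide): there $z+b_{\ell+1}$ has modulus $\varepsilon\to0$, so $\prod_{k=1}^n(z+b_k)\to0$ and the limit of the imaginary part is $0$, which agrees with the formula because $\prod_{k=1}^n|a_k-a_1-t|=0$ there regardless of the value of $\sin\frac{\ell\pi}{n}$; and when $a_\ell=a_{\ell+1}$ the interval $(b_\ell,b_{\ell+1}]$ is empty and nothing is asserted. I expect the main, and only mildly delicate, obstacle to be the bookkeeping that for $t$ in the half-open interval $(b_\ell,b_{\ell+1}]$ exactly $\ell$ of the shifted factors have already crossed onto the negative real axis, which is precisely what the ordering hypothesis $a_\ell\le a_{\ell+1}$ guarantees.
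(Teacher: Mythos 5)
Your proposal is correct and follows essentially the same route as the paper: express the principal branch as $\exp\bigl(\frac1n\sum_k\operatorname{Log}(z+b_k)\bigr)$, track the limiting argument of each shifted factor ($\pi$ if $b_k<t$, $0$ if $b_k>t$) as $z=-t+i\varepsilon$ approaches the cut, and treat separately the endpoints $t=b_{\ell+1}$, where a vanishing factor forces the whole expression to tend to $t$ so the imaginary part limit is $0$. Your explicit remarks on the endpoint and on coincident $a_k$'s are, if anything, slightly more careful than the paper's treatment.
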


\begin{proof}
For $t=a_{\ell+1}-a_1$ for $1\le\ell\le n-1$, we have
\begin{multline*}
h_n(-t+i\varepsilon)=\exp\Biggl[\frac1n\sum_{k\ne\ell+1}^n \ln(a_k-a_1-t+i\varepsilon)+\frac1n\ln(i\varepsilon)\Biggr]+t-i\varepsilon\\
\begin{aligned}
&=\exp\Biggl[\frac1n\sum_{k\ne\ell+1}^n \ln(a_k-a_1-t+i\varepsilon)\Biggr] \exp\biggl[\frac1n\biggl(\ln|\varepsilon|+\frac\pi2i\biggr)\biggl]+t-i\varepsilon\\
&\to\exp\Biggl[\frac1n\sum_{k\ne\ell+1}^n \ln(a_k-a_1-t)\Biggr] \lim_{\varepsilon\to0^+}\exp\biggl[\frac1n\biggl(\ln|\varepsilon|+\frac\pi2i\biggr)\biggl]+t\\
&=t
\end{aligned}
\end{multline*}
as $\varepsilon\to0^+$. Hence, when $t=a_{\ell+1}-a_1$ for $1\le\ell\le n-1$, we have
\begin{equation*}
\lim_{\varepsilon\to0^+}\Im h_n(-t+i\varepsilon)=0.
\end{equation*}
\par
For $t\in(0,\infty)\setminus\{a_{\ell+1}-a_1,1\le\ell\le n-1\}$ and $\varepsilon>0$, we have
\begin{gather*}
\begin{aligned}
h_n(-t+i\varepsilon)&=G_n(a-a_1-t+i\varepsilon)+t-i\varepsilon\\
&=\exp\Biggl[\frac1n\sum_{k=1}^n\ln(a_k-a_1-t+i\varepsilon)\Biggr]+t-i\varepsilon\\
&=\exp\Biggl\{\frac1n\sum_{k=1}^n[\ln|a_k-a_1-t+i\varepsilon| +i\arg(a_k-a_1-t+i\varepsilon)]\Biggr\}+t-i\varepsilon\\
&\to
\begin{cases}\displaystyle
\exp\Biggl(\frac1n\sum_{k=1}^n\ln|a_k-a_1-t| +\frac{\ell\pi}ni\Biggr)+t, & t\in(a_\ell-a_1,a_{\ell+1}-a_1)\\ \displaystyle
\exp\Biggl(\frac1n\sum_{k=1}^n\ln|a_k-a_1-t| +\pi i\Biggr)+t, & t>a_n-a_1
\end{cases}
\end{aligned}\\
=
\begin{cases}\displaystyle
\Biggl(\prod_{k=1}^n|a_k-a_1-t|\Biggr)^{1/n} \biggl(\cos\frac{\ell\pi}n+i\sin\frac{\ell\pi}n\biggr)+t, & t\in(a_\ell-a_1,a_{\ell+1}-a_1)\\ \displaystyle
\Biggl(\prod_{k=1}^n|a_k-a_1-t|\Biggr)^{1/n} (\cos\pi+i\sin\pi)+t, & t>a_n-a_1
\end{cases}
\end{gather*}
as $\varepsilon\to0^+$. As a result, we have
\begin{equation*}
\lim_{\varepsilon\to0^+}\Im h_n(-t+i\varepsilon)=
\begin{cases}\displaystyle
\Biggl(\prod_{k=1}^n|a_k-a_1-t|\Biggr)^{1/n}\sin\frac{\ell\pi}n, & t\in(a_\ell-a_1,a_{\ell+1}-a_1);\\
0, & t>a_n-a_1.
\end{cases}
\end{equation*}
The proof of Lemma~\ref{Im-comput-AG-New-P-lem} is completed.
\end{proof}

\section{An integral representation of the geometric mean}

Now we are in a position to establish an integral representation of the geometric mean $G_n(a+z)$.

\begin{thm}\label{AG-New-thm1}
Let $0<a_k\le a_{k+1}$ for $1\le k\le n-1$ and $a+z=(a_1+z,a_2+z,\dotsc,a_n+z)$ for $z\in\mathbb{C}\setminus(-\infty,-a_1]$. Then the principal branch of the geometric mean $G_n(a+z)$ has the integral representation
\begin{equation}\label{AG-New-eq1}
G_n(a+z)=A_n(a)+z-\frac1\pi\sum_{\ell=1}^{n-1}\sin\frac{\ell\pi}n \int_{a_\ell}^{a_{\ell+1}} \Biggl|\prod_{k=1}^n(a_k-t)\Biggr|^{1/n} \frac{\td t}{t+z}.
\end{equation}
\end{thm}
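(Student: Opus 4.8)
The plan is to apply Cauchy's integral formula to the holomorphic function $f_n(z)=G_n(a+z)-z$ along a keyhole contour surrounding the slit $(-\infty,-a_1]$, pushing the outer part of the contour to infinity (where Lemma~\ref{AG-New-lem=1} controls $f_n$) and collapsing the inner part onto the two edges of the slit (where Lemma~\ref{Im-comput-AG-New-P-lem} computes the jump of $f_n$).

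First I would fix the analytic setting. Writing $G_n(a+z)=\exp\bigl[\frac1n\sum_{k=1}^n\ln(a_k+z)\bigr]$ with principal logarithms, $f_n$ is holomorphic on $\Omega=\mathbb{C}\setminus(-\infty,-a_1]$, and since $a_1+w\to0$ forces $G_n(a+w)\to0$, it stays bounded as $w\to-a_1$ inside $\Omega$. By Lemma~\ref{AG-New-lem=1} the function $g(z)=f_n(z)-A_n(a)$ is holomorphic on $\Omega$ and tends to $0$ as $z\to\infty$. Fix $z\in\Omega$, choose $R>|z|$, and let $C_R$ be the positively oriented boundary of $\{|w|<R\}\setminus(-\infty,-a_1]$: the large circular arc $|w|=R$, the upper edge of $[-R,-a_1]$ traversed towards $-a_1$, a small circle around $-a_1$, and the lower edge traversed back to $-R$. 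Cauchy's integral formula gives $g(z)=\frac1{2\pi i}\oint_{C_R}\frac{g(w)}{w-z}\,\td w$.

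Next I would let $R\to\infty$ and shrink the circle at the tip. Because $g(w)\to0$ at infinity, the estimate $\bigl|\int\bigr|\le(\text{length})\cdot\max|g/(w-z)|$ kills the arc contribution; because $g$ stays bounded near $-a_1$, the small-circle contribution vanishes with its radius. Only the difference of the boundary values $f_n(x\pm i0):=\lim_{\varepsilon\to0^+}f_n(x\pm i\varepsilon)$ across the slit survives:
\begin{equation*}
g(z)=\frac1{2\pi i}\int_{-\infty}^{-a_1}\frac{f_n(x+i0)-f_n(x-i0)}{x-z}\,\td x .
\end{equation*}
Since $f_n$ is real on $(-a_1,\infty)$, Schwarz reflection gives $f_n(x-i0)=\overline{f_n(x+i0)}$, so the numerator equals $2i\,\Im f_n(x+i0)$. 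Translating $x$ by $a_1$ identifies $\Im f_n(x+i0)$ with the quantity evaluated in Lemma~\ref{Im-comput-AG-New-P-lem} (applied to $h_n(z)=f_n(z-a_1)-a_1$): for $x\in(-a_{\ell+1},-a_\ell)$ with $1\le\ell\le n-1$ it is $\bigl(\prod_{k=1}^n|a_k+x|\bigr)^{1/n}\sin\frac{\ell\pi}{n}$, while for $x<-a_n$ it vanishes because there $\ell=n$ and $\sin\pi=0$. Hence the integral is effectively over $[-a_n,-a_1]$.

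Finally I would substitute $x=-t$, so $\frac{\td x}{x-z}=\frac{\td t}{t+z}$ and $|a_k+x|=|a_k-t|$, split $[a_1,a_n]=\bigcup_{\ell=1}^{n-1}[a_\ell,a_{\ell+1}]$ on which $\sin\frac{\ell\pi}{n}$ is constant, and obtain
\begin{equation*}
g(z)=-\frac1\pi\sum_{\ell=1}^{n-1}\sin\frac{\ell\pi}{n}\int_{a_\ell}^{a_{\ell+1}}\Biggl|\prod_{k=1}^n(a_k-t)\Biggr|^{1/n}\frac{\td t}{t+z};
\end{equation*}
adding $A_n(a)$ and recalling $g(z)=G_n(a+z)-z-A_n(a)$ yields~\eqref{AG-New-eq1}. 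Along the way one checks that each integral converges (the integrand is $O(|t-a_j|^{1/n})$ near the nodes $a_j$, hence bounded, and $t+z\ne0$ for $t\in[a_1,a_n]$ because $z\notin(-\infty,-a_1]$), and that coinciding $a_k$'s are harmless since the corresponding subintervals degenerate to points. The hard part will be the rigorous limiting argument for the keyhole contour: it rests on Lemma~\ref{AG-New-lem=1} giving \emph{uniform} decay of $g$ at infinity and on the jump of $f_n$ being \emph{identically} zero on $(-\infty,-a_n)$, without which the two edges of the slit would not cancel and the integral would diverge.
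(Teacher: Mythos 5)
Your proposal is correct and takes essentially the same route as the paper: Cauchy's integral formula on a keyhole contour around the slit, with Lemma~\ref{AG-New-lem=1} controlling the outer circle, boundedness at the branch point killing the small circle, and Lemma~\ref{Im-comput-AG-New-P-lem} together with the reflection identity $f_n(\overline{z})=\overline{f_n(z)}$ giving the jump across the slit. The only cosmetic differences are that you subtract $A_n(a)$ first so the outer arc contributes zero (the paper keeps $h_n$ and collects $2\pi i\,A_n(a-a_1)$ from that arc) and that you work directly on the slit $(-\infty,-a_1]$ instead of shifting by $a_1$ and back as the paper does.
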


\begin{proof}
By standard arguments, it is not difficult to see that
\begin{equation}\label{weighted-geometric-eq2-n}
\lim_{z\to0^+}[zh_n(z)]=0\quad \text{and}\quad h_n(\overline{z})=\overline{h_n(z)}.
\end{equation}
\par
For any but fixed point $z\in\mathbb{C}\setminus(-\infty,0]$, choose $0<\varepsilon<1$ and $r>0$ such that $0<\varepsilon<|z|<r$, and consider the positively oriented contour $C(\varepsilon,r)$ in $\mathbb{C}\setminus(-\infty,0]$ consisting of the half circle $z=\varepsilon e^{i\theta}$ for $\theta\in\bigl[-\frac\pi2,\frac\pi2\bigr]$ and the half lines $z=x\pm i\varepsilon$ for $x\le0$ until they cut the circle $|z|=r$, which close the contour at the points $-r(\varepsilon)\pm i\varepsilon$, where $0<r(\varepsilon)\to r$ as $\varepsilon\to0$.
By the famous Cauchy's integral formula in the theory of complex functions, we have
\begin{equation}
\begin{split}\label{h(z)-Cauchy-Apply}
h_n(z)&=\frac1{2\pi i}\oint_{C(\varepsilon,r)}\frac{h_n(w)}{w-z}\td w\\
&=\frac1{2\pi i}\biggl[\int_{\pi/2}^{-\pi/2}\frac{i\varepsilon e^{i\theta}h\bigl(\varepsilon e^{i\theta}\bigr)}{\varepsilon e^{i\theta}-z}\td\theta  +\int_{\arg[-r(\varepsilon)-i\varepsilon]}^{\arg[-r(\varepsilon)+i\varepsilon]}\frac{ir e^{i\theta}h\bigl(re^{i\theta}\bigr)}{re^{i\theta}-z}\td\theta\\
&\quad+\int_{-r(\varepsilon)}^0 \frac{h_n(x+i\varepsilon)}{x+i\varepsilon-z}\td x+\int_0^{-r(\varepsilon)}\frac{h_n(x-i\varepsilon)}{x-i\varepsilon-z}\td x\biggr].
\end{split}
\end{equation}
By the limit in~\eqref{weighted-geometric-eq2-n}, it follows that
\begin{equation}\label{zf(z)=0}
\lim_{\varepsilon\to0^+}\int_{\pi/2}^{-\pi/2}\frac{i\varepsilon e^{i\theta}h_n\bigl(\varepsilon e^{i\theta}\bigr)}{\varepsilon e^{i\theta}-z}\td\theta=0.
\end{equation}
By virtue of the limit~\eqref{AG-New-lem=1-lim} in Lemma~\ref{AG-New-lem=1}, we deduce that
\begin{equation}\label{big-circle-int=0}
\begin{split}
\lim_{\substack{\varepsilon\to0^+\\r\to\infty}} \int_{\arg[-r(\varepsilon)-i\varepsilon]}^{\arg[-r(\varepsilon)+i\varepsilon]}\frac{ir e^{i\theta}h_n\bigl(re^{i\theta}\bigr)}{re^{i\theta}-z}\td\theta
&=\lim_{r\to\infty}\int_{-\pi}^{\pi}\frac{ir e^{i\theta}h_n\bigl(re^{i\theta}\bigr)}{re^{i\theta}-z}\td\theta\\
&=2A_n(a-a_1)\pi i,
\end{split}
\end{equation}
where $a-a_1=(0,a_2-a_1,\dotsc,a_n-a_1)$.
Utilizing the second formula in~\eqref{weighted-geometric-eq2-n} and the limit~\eqref{Im-comput-AG-New-P-eq} in Lemma~\ref{Im-comput-AG-New-P-lem} results in
\begin{align}
&\quad\int_{-r(\varepsilon)}^0 \frac{h_n(x+i\varepsilon)}{x+i\varepsilon-z}\td x
+\int_0^{-r(\varepsilon)}\frac{h_n(x-i\varepsilon)}{x-i\varepsilon-z}\td x \notag\\
&=\int_{-r(\varepsilon)}^0 \biggl[\frac{h_n(x+i\varepsilon)}{x+i\varepsilon-z}-\frac{h_n(x-i\varepsilon)}{x-i\varepsilon-z}\biggr]\td x\notag\\
&=\int_{-r(\varepsilon)}^0\frac{(x-i\varepsilon-z)h_n(x+i\varepsilon) -(x+i\varepsilon-z)h_n(x-i\varepsilon)} {(x+i\varepsilon-z)(x-i\varepsilon-z)}\td x\notag\\
&=\int_{-r(\varepsilon)}^0\frac{(x-z)[h_n(x+i\varepsilon)-h_n(x-i\varepsilon)] -i\varepsilon[h_n(x-i\varepsilon)+h_n(x+i\varepsilon)]} {(x+i\varepsilon-z)(x-i\varepsilon-z)}\td x\notag\\
&=2i\int_{-r(\varepsilon)}^0\frac{(x-z)\Im h_n(x+i\varepsilon) -\varepsilon\Re h_n(x+i\varepsilon)} {(x+i\varepsilon-z)(x-i\varepsilon-z)}\td x\notag\\
&\to2i\int_{-r}^0\frac{\lim_{\varepsilon\to0^+}\Im h_n(x+i\varepsilon)}{x-z}\td x\notag\\
&=-2i\int^r_0\frac{\lim_{\varepsilon\to0^+}\Im h_n(-t+i\varepsilon)}{t+z}\td t\notag\\
&\to-2i\int^\infty_0\frac{\lim_{\varepsilon\to0^+}\Im h_n(-t+i\varepsilon)}{t+z}\td t\notag\\
&=-2i\sum_{\ell=1}^{n-1}\sin\frac{\ell\pi}n \int_{a_\ell-a_1}^{a_{\ell+1}-a_1} \Biggl[\prod_{k=1}^n|a_k-a_1-t|\Biggr]^{1/n} \frac{\td t}{t+z} \label{level0lines}
\end{align}
as $\varepsilon\to0^+$ and $r\to\infty$. Substituting equations~\eqref{zf(z)=0}, \eqref{big-circle-int=0}, and~\eqref{level0lines} into~\eqref{h(z)-Cauchy-Apply} and simplifying generate
\begin{equation}\label{h-n(z)=int}
h_n(z)=A_n(a-a_1)-\frac1\pi\sum_{\ell=1}^{n-1}\sin\frac{\ell\pi}n \int_{a_\ell-a_1}^{a_{\ell+1}-a_1} \Biggl[\prod_{k=1}^n|a_k-a_1-t|\Biggr]^{1/n} \frac{\td t}{t+z}.
\end{equation}
From $f_n(z)=h_n(z+a_1)+a_1$ and~\eqref{h-n(z)=int}, it is immediate to deduce that
\begin{align*}
f_n(z)&=A_n(a-a_1)+a_1-\frac1\pi\sum_{\ell=1}^{n-1}\sin\frac{\ell\pi}n \int_{a_\ell-a_1}^{a_{\ell+1}-a_1} \Biggl[\prod_{k=1}^n|a_k-a_1-t|\Biggr]^{1/n} \frac{\td t}{t+z+a_1}\\
&=A_n(a)-\frac1\pi\sum_{\ell=1}^{n-1}\sin\frac{\ell\pi}n \int_{a_\ell}^{a_{\ell+1}} \Biggl[\prod_{k=1}^n|a_k-t|\Biggr]^{1/n} \frac{\td t}{t+z},
\end{align*}
from which the integral representation~\eqref{AG-New-eq1} follows. Theorem~\ref{AG-New-thm1} is proved.
\end{proof}

\section{A new proof of the GA mean inequality}

With the aid of the integral representation~\eqref{AG-New-eq1} in Theorem~\ref{AG-New-thm1}, we can easily deduce the GA mean inequality~\eqref{AG-ineq} as follows.
\par
Taking $z=0$ in the integral representation~\eqref{AG-New-eq1} yields
\begin{equation*}
G_n(a)=A_n(a)-\frac1\pi\sum_{\ell=1}^{n-1}\sin\frac{\ell\pi}n \int_{a_\ell}^{a_{\ell+1}} \Biggl[\prod_{k=1}^n|a_k-t|\Biggr]^{1/n} \frac{\td t}{t}\le A_n(a).
\end{equation*}
From this, it is also obvious that the equality in~\eqref{AG-ineq} is valid if and only if $a_1=a_2=\dotsm=a_n$. The proof of the GA mean inequality~\eqref{AG-ineq} is complete.

\end{document}